\newtheorem{theorem}{Theorem}[section]
\newtheorem{proposition}[theorem]{Proposition}
\theoremstyle{definition}
\newtheorem{definition}[theorem]{Definition}
\theoremstyle{remark}
\newtheorem{remark}[theorem]{Remark}
\numberwithin{equation}{section}
\begin{document}
\setcounter{page}{1}

\title{The $a$-number of $y^n=x^m+x$ over finite fields}

\author{Sepideh Farivar$^{2}$, Behrooz Mosallaei$^{1,\dag}$, Farzaneh Ghanbari$^{3}$ and Vahid Nourozi$^{1,\star}$}
\address{$^{1}$ The Klipsch School of Electrical and Computer Engineering, New Mexico State University,
Las Cruces, NM 88003 USA}
\email{behrooz@nmsu.edu$^{\dag}$, nourozi@nmsu.edu$^{\star}$}
\address{$^{2}$ Department of Computer Science, University of Nevada, Las Vegas, 4505 S. Maryland Parkway, Las Vegas, NV, 89154-4030, USA
}
\email{Farivar@unlv.nevada.edu}
\address{$^{1}$ Department of Pure Mathematics, Faculty of Mathematical Sciences,
 Tarbiat Modares University, P.O.Box:14115-134, Tehran, Iran.}
\email{behrooz@nmsu.edu$^{\dag}$, nourozi@nmsu.edu$^{\star}$}
\thanks{$^{\star}$Corresponding author}





\keywords{$a$-number; Cartier operator; Super-singular Curves; Maximal Curves.}

\begin{abstract}
This paper presents a formula for $a$-number of certain maximal curves characterized by the equation $y^{\frac{q+1}{2}} = x^m + x$ over the finite field $\mathbb{F}_{q^2}$. $a$-number serves as an invariant for the isomorphism class of the $p$-torsion group scheme. Utilizing the action of the Cartier operator on $H^0(\mathcal{X}, \Omega^1)$, we establish a closed formula for $a$-number of $\mathcal{X}$.

\end{abstract}
 \subjclass{11G20, 11M38, 14G15, 14H25}

\maketitle
\section{Introduction}

Consider $\mathcal{X}$ as a geometrically irreducible, projective, and non-singular algebraic curve defined over a finite field $\mathbb{F}{\ell}$ of order $\ell$. Set $\mathcal{X}(\mathbb{F}{\ell})$ denotes the $\mathbb{F}{\ell}$-rational points of $\mathcal{X}$. A fundamental problem in studying curves over finite fields is determining the size of $\mathcal{X}(\mathbb{F}{\ell})$. The Hasse-Weil bound provides a foundational result, stating that:
$$\mid \# \mathcal{X}(\mathbb{F}_{\ell}) - (\ell +1) \mid \leq 2g \sqrt{\ell},$$
where $g = g(\mathcal{X})$ represents genus $\mathcal{X}$.

The curve $\mathcal{X}$ is called maximal over $\mathbb{F}_{\ell}$ if the number of elements of $\mathcal{X}(\mathbb{F}_{\ell})$ satisfies
$$\# \mathcal{X}(\mathbb{F}_{\ell})= \ell + 1 + 2g \sqrt{\ell}.$$
We only consider the maximal curves of the positive genus; and hence, $\ell$ will always be a square, say, $\ell = q^2$.

In \cite{ih}, Ihara demonstrated that if a curve $\mathcal{X}$ is maximal over $\mathbb{F}_{q^2}$, then it's
$$g \leq g_1:= \dfrac{q(q-1)}{2}.$$

In \cite{tor} authors showed that

$$\qquad  \mbox{either} \qquad g \leq g_2 := \lfloor \frac{(q-1)^2}{4} \rfloor \qquad  \mbox{or} \qquad  g_1=\frac{q(q-1)}{2}. $$
Ruck and Stichtenoth \cite{stir} showed that, up to  $\mathbb{F}_{q^2}$-isomorphism, there is just one maximal curve over $\mathbb{F}_{q^2}$ of genus  $\frac{q(q-1)}{2}$, namely the so-called Hermitian curve over $\mathbb{F}_{q^2}$ which can be defined by the affine equation
 \begin{equation}\label{xxx1}
y^{q}+y=x^{q+1}.
 \end{equation}

Let $n,m \geq 2$ be integers such that $\mbox{gcd}(n,m) = 1, \mbox{gcd}(q, n) = 1$ and $\mbox{gcd}(q,m-1) = 1$ where $q = p^s$ for $s \geq 1$, and let $\mathcal{X}$ be the non-singular model over $\mathbb{F}_{q^2}$ of the plane affine curve
 \begin{equation}
y^{n}=x^{m}+x.
 \end{equation}
Note that $\mathcal{X}$ is a Hermitian curve over $\mathbb{F}_{q^2}$ if $n = q +1$ and $m = q$. And $\mathcal{X}$ has genus
 \begin{equation*}
g(\mathcal{X})=\dfrac{(m-1)(n-1)}{2}.
 \end{equation*}
In this paper we suppose that $n =\frac{q+1}{2}$ and $m = 2, 3$ or $m = p^b$ where $b$
divides $s$. Tafazolian and Torres in \cite{tafazol} proved that the curve $\mathcal{X}$ with the above
condition is Maximal curve over $\mathbb{F}_{q^2}$.

 Consider multiplication by $p$-morphism $[p]: X \rightarrow X$ which is a finite flat morphism of degree $p^{2g}$.  The factor is $[p]=V \circ F$. Here, $F: X \rightarrow X^{(p)}$   is the relative Frobenius morphism coming from the $p$-power map on the
structure sheaf; and  the Verschiebung morphism  $V: X^{(p)} \rightarrow X$ is the dual of $F$.  The kernel of multiplication by $p$ on $X$, is defined by the group of $X[p]$.
The important invariant  is the $a$-number $a(\mathcal{X})$ of curve $\mathcal{X}$ defined by
$$a(\mathcal{X})=\mbox{dim}_{\mathbb{\bar{F}}_p} \mbox{Hom}(\alpha_{p}, X[p]),$$
where $\alpha_p$ is the kernel of the Frobenius endomorphism in the group scheme $\mbox{Spec}(k[X]/(X^p))$. Another definition for the $a$-number is
$$ a(\mathcal{X}) = \mbox{dim}_{\mathbb{F}_p}(\mbox{Ker}(F) \cap \mbox{Ker}(V)).$$


 A few results on the rank of the Carteir operator (especially $a$-number) of curves is introduced by Kodama and Washio \cite{13}, González \cite{8}, Pries and Weir \cite{17}, Yui \cite{Yui} and Montanucci and Speziali \cite{maria}. Besides that, Vahid talked about the rank of the Cartier of the maximal curves in \cite{esfahan,aut}, the hyperelliptic curve in \cite{shiraz}, the maximal function fields of $\mathbb{F}_{q^2}$ in \cite{misori,behrooz}, the Picard Curve in \cite{picard} and, the Artin-Schreier curve and application of the Cartier operator in coding theory \cite{vahidd}. Vahid also wrote about these topics in his \cite{phd} dissertation.

In this study, we determine the $a$-number of certain maximal curves. In the case $g=g_1$, the $a$-number of Hermitian curves were computed by Gross \cite{10}.  Here, we compute $a$-number of maximal curves over $\mathbb{F}_{q^2}$  with genus $g=g_2$  for infinitely many values of $q$.


\section{The Cartier operator}
Let $k$ be an algebraically closed field of the characteristic $p>0$.
Let $\mathcal{X}$ be a
curve defined over $k$.
The Cartier operator is a $1/p$-linear operator acting on the sheaf $\Omega^1:=\Omega^1_{\mathcal{X}}$ of differential forms on a curve $\mathcal{X}$ with positive characteristic $p>0$.

 Let $K=k(\mathcal{X})$ represent the function field of curve $\mathcal{X}$ of genus $g$, defined over  $k$. defined over an algebraically closed field $K$ as an element $x \in K \setminus K^p$.

\begin{definition}
  (Cartier Operator). Let $\omega \in  \Omega_{K/K_q}$. There exist $f_0,\cdots, f_{p-1}$ such
that $\omega = (f^p_0 + f^p_1x +\cdots + f^p_{p-1}x^{p-1})dx$. The Cartier operator $\mathfrak{C}$ is defined by
$$\mathfrak{C}(\omega) := f_{p-1}dx.$$
This definition does not depend on the choice of $x$ (see \cite[Proposition 1]{100}).
\end{definition}
We refer the reader to \cite{100,30,40,150} for proofs of the following statements.

\begin{proposition}
  (Global Properties of $\mathfrak{C}$). For all $\omega \in  \Omega_{K/K_q}$ and all $f \in K$,

  \begin{itemize}
    \item $\mathfrak{C}(f^p\omega) = f\mathfrak{C}(\omega)$;
    \item $\mathfrak{C}(\omega) = 0 \Leftrightarrow \exists h \in K, \omega = dh$;
    \item $\mathfrak{C}(\omega) = \omega \Leftrightarrow \exists h \in K, \omega = dh/h$.
  \end{itemize}
\end{proposition}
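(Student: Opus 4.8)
The plan is to work throughout with the local description of $\mathfrak{C}$ supplied by the definition: having fixed a separating variable $x$ (so that $\{1,x,\dots,x^{p-1}\}$ is a basis of $K$ over $K^p$), every differential is written uniquely as $\omega=(f_0^p+f_1^px+\dots+f_{p-1}^px^{p-1})\,dx$ and $\mathfrak{C}(\omega)=f_{p-1}\,dx$. The independence of this recipe from the choice of $x$ is the cited result, and I would take it as given. With this in hand the first bullet is immediate: multiplying the expansion of $\omega$ by $f^p$ replaces each coefficient $f_i^p$ by $(ff_i)^p$, so the coefficient of $x^{p-1}$ becomes $(ff_{p-1})^p$'s root $ff_{p-1}$, and extracting it gives $f\,\mathfrak{C}(\omega)$.

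For the second bullet I would prove both implications by direct manipulation of the expansion. If $\omega=dh$, expand $h=\sum_{i=0}^{p-1}h_i^px^i$; since $d(h_i^p)=0$ and $d(x^i)=ix^{i-1}\,dx$, one gets $dh=\sum_{i=1}^{p-1}ih_i^px^{i-1}\,dx$, whose top term is $x^{p-2}$, so the coefficient of $x^{p-1}$ vanishes and $\mathfrak{C}(dh)=0$. Conversely, if $\mathfrak{C}(\omega)=0$ then $f_{p-1}=0$ and $\omega=\sum_{i=0}^{p-2}f_i^px^i\,dx$; setting $h=\sum_{i=0}^{p-2}(i+1)^{-1}f_i^px^{i+1}$ works, the inverses $(i+1)^{-1}$ existing precisely because $i+1\in\{1,\dots,p-1\}$ is prime to $p$. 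The only subtle point is that the exponent range stops at $p-2$, which is exactly what makes termwise integration possible.

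For the third bullet, the implication $\omega=dh/h\Rightarrow\mathfrak{C}(\omega)=\omega$ I would obtain cheaply by exploiting independence of the separating variable: if $h\in K^p$ then $dh/h=0$ and the claim is trivial, while if $h\notin K^p$ then $h$ itself is separating, and computing $\mathfrak{C}$ with $x=h$ gives $dh/h=(h^{-1})^ph^{p-1}\,dh$, an expansion whose only nonzero coefficient sits in degree $p-1$, so $\mathfrak{C}(dh/h)=h^{-1}\,dh=dh/h$.

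The reverse implication is the genuine obstacle: given a fixed differential $\mathfrak{C}(\omega)=\omega$, one must produce a global $h\in K^{\times}$ with $dh/h=\omega$. Here I would use the explicit derivative form of the operator, $\mathfrak{C}(y\,dt)=(-d^{p-1}y/dt^{p-1})^{1/p}\,dt$, to recast the fixed-point condition as an Artin--Schreier-type differential equation $d^{p-1}y/dt^{p-1}+y^p=0$, solve it for a logarithmic primitive in each completed local ring, and then patch the local solutions: the obstruction to assembling them into one rational function lives in $H^1(\mathcal{X},\mathbb{F}_p)$, and one identifies the fixed space of $\mathfrak{C}$ with the space of logarithmic differentials via this cohomological comparison (Cartier's theorem). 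This gluing step — showing the locally defined logarithmic primitives agree up to constants in $\mathbb{F}_p$ and so descend to a single $h\in K^{\times}$ — is where the real work lies, and I expect it to rely on the exactness statement already proved in the second bullet together with the completeness of $\mathcal{X}$.
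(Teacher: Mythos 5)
Your proofs of the first two bullets and of the easy implication of the third are correct and complete: the $p$-linearity computation, the term-by-term integration (using that $i+1$ is invertible mod $p$ for $0 \le i \le p-2$, and that $i^p=i$ so coefficients such as $ih_i^p$ are again $p$-th powers), and the trick of evaluating $\mathfrak{C}(dh/h)$ in the separating variable $x=h$ are all sound. It is worth noting that the paper itself proves none of this: it simply refers the reader to Cartier, Seshadri, and Tsfasman--Vladut--Nogin for all three statements, so on these points you have done strictly more than the paper does.

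The genuine gap is the forward implication of the third bullet, $\mathfrak{C}(\omega)=\omega \Rightarrow \omega = dh/h$, which you acknowledge but do not prove, and the sketch you give cannot be completed as stated. The obstruction to patching local logarithmic primitives is not a class in $H^1(\mathcal{X},\mathbb{F}_p)$ that can be argued away: that group (\'etale cohomology, classifying Artin--Schreier covers) has $\mathbb{F}_p$-dimension equal to the $p$-rank $\sigma$ of $\mathcal{X}$ and is nonzero in general; also, two local primitives of the same logarithmic differential differ by a local $p$-th power, not by a constant in $\mathbb{F}_p$, so the additive Artin--Schreier framing is the wrong torsor structure. The standard argument needs two global inputs your sketch omits. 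First, the local fixed-point analysis shows $\omega$ has only simple poles with residues in $\mathbb{F}_p$; the residue divisor $D$ has degree divisible by $p$ (residue theorem), and since the Jacobian $J(k)$ over an algebraically closed field is a divisible group, one can find $f\in K^\times$ with $\mathrm{div}(f)\equiv D$ modulo $p\,\mathrm{Div}(\mathcal{X})$, so that $\omega - df/f$ is a \emph{holomorphic} fixed differential. Second, one identifies the $\mathbb{F}_p$-space of holomorphic fixed differentials, whose dimension is $\sigma$ by semilinear algebra applied to $\mathfrak{C}$ acting on $H^0(\mathcal{X},\Omega^1)$, with the image under $h\mapsto dh/h$ of $J[p](k)\cong(\mathbb{Z}/p)^\sigma$, via injectivity plus a dimension count. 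Without these steps, your concluding sentence (``one identifies the fixed space of $\mathfrak{C}$ with the space of logarithmic differentials \ldots (Cartier's theorem)'') simply invokes the statement being proved. The honest options are either to cite the literature for this implication, exactly as the paper does, or to supply the Jacobian-divisibility and $p$-rank arguments above.
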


\begin{remark}\label{remark}
\rm{ Moreover, one can easily show
that
\begin{equation*}
\mathfrak{C}(x^j dx) = \left\{
\begin{array}{ccc}
    0 & \mbox{if}&  \hspace{.4cm} p \nmid j+1  \\
    x^{s-1}dx & \mbox{if} &  \hspace{.4cm} j+1=ps.
\end{array} \right.
\end{equation*}}

\end{remark}

If $\mbox{div}(\omega)$ is effective, then differential $\omega$ is holomorphic. The set $H^0(\mathcal{X}, \Omega ^1)$ of holomorphic differentials is a $g$-dimensional $k$-vector subspace of $\Omega^1$ such that $\mathfrak{C}(H^0(\mathcal{X}, \Omega^1)) \subseteq H^0(\mathcal{X}, \Omega^1)$. If $\mathcal{X}$ is a curve, then $a$-number of $\mathcal{X}$ equals the dimension of the kernel of the Cartier operator $H^0(\mathcal{X}, \Omega^1)$ (or equivalently, the dimension of the space of exact holomorphic differentials on $\mathcal{X}$) (see \cite[5.2.8]{14}).

The following theorem is based on Gorenstein \cite[Theorem 12]{9}.
\begin{theorem}\label{2.2}
  A differential $\omega \in \Omega^1$ is holomorphic if and only if it is of the form $(h(x, y)/F_y)dx$, where $H:h(X, Y) =0$ is a canonical adjoint.
\end{theorem}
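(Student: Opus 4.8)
The plan is to reduce the holomorphy condition $\operatorname{div}(\omega)\ge 0$ to the classical adjoint conditions by first isolating a distinguished rational differential and computing its divisor. Write $F(X,Y)$ for the defining polynomial of the plane model, of degree $d$. Since $k(\mathcal{X})$ is separably generated over $k(x)$, we have $dx\ne 0$, and every $\omega\in\Omega^1$ can be written uniquely as $\omega=R\,dx$ with $R\in k(\mathcal{X})$. Differentiating $F(x,y)=0$ gives $F_x\,dx+F_y\,dy=0$, hence
$$\frac{dx}{F_y}=-\frac{dy}{F_x},$$
so $\xi:=dx/F_y$ is a well-defined nonzero rational differential, symmetric between the two projections and thus insensitive to which variable one eliminates. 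Putting $h:=R\,F_y$ lets me rewrite $\omega=(h/F_y)\,dx=h\,\xi$, and the theorem becomes the assertion that $\omega$ is holomorphic precisely when $h$ is (represented by) a form defining a canonical adjoint $H\colon h(X,Y)=0$.

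The technical core is the divisor of $\xi$. I would compute it locally at the smooth affine points, the points over $x=\infty$, and the singular points. At a smooth affine point the identity $\xi=-dy/F_x$ shows $\xi$ is a local generator of $\Omega^1$ whether or not $x$ ramifies there --- if $F_y$ vanishes then $y$ is a uniformizer and $F_x$ is a unit --- so $\xi$ has neither zero nor pole at any smooth affine point. A local computation at infinity and at the singular points then yields Gorenstein's formula
$$\operatorname{div}\!\left(\frac{dx}{F_y}\right)=(d-3)\,L-\mathfrak{C},$$
where $L$ is the pullback of the line at infinity (of degree $d$) and $\mathfrak{C}$ is the conductor divisor supported at the singular points. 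Granting this, $\omega=h\,\xi$ is holomorphic if and only if $\operatorname{div}(h)+(d-3)L-\mathfrak{C}\ge 0$. The behaviour at infinity forces $\deg h\le d-3$; the behaviour at the singular points says exactly that $h$ vanishes to the order prescribed by $\mathfrak{C}$, i.e. that $H\colon h=0$ passes through the conductor and is therefore a canonical adjoint; and at the smooth affine points the inequality is automatic. This establishes both implications simultaneously.

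The main obstacle is the local analysis at the singular points, where the order of $\xi$ must be matched against the conductor and the vanishing of $h$ against the adjoint conditions. This is where the Gorenstein property of plane-curve singularities is essential. Writing $\nu\colon\widetilde{\mathcal{X}}\to\mathcal{X}$ for the normalization, $\mathcal{O}$ for a local ring of $\mathcal{X}$, $\widetilde{\mathcal{O}}$ for its integral closure, and $\mathfrak{c}=\{z\in\widetilde{\mathcal{O}}:z\,\widetilde{\mathcal{O}}\subseteq\mathcal{O}\}$ for the conductor, I would compare $\nu^{\ast}\xi$ with $\mathfrak{c}$ and use the self-duality of the conductor --- the equalities $\dim_k\mathcal{O}/\mathfrak{c}=\dim_k\widetilde{\mathcal{O}}/\mathcal{O}=\delta_P$ valid for Gorenstein singularities --- to identify the pole of $1/F_y$ forced at $P$ with the zero that $h$ must supply, giving the local contribution $-\mathfrak{C}$ and hence the global formula above. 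As a consistency check I would confirm the dimension count: canonical adjoints modulo the conductor conditions form a space of dimension
$$\binom{d-1}{2}-\sum_{P}\delta_P=g,$$
which equals $\dim_k H^0(\mathcal{X},\Omega^1)$ and confirms that $h\mapsto h\,\xi$ is an isomorphism onto the space of holomorphic differentials.
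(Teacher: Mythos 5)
The first thing to note: the paper does not actually prove this statement --- it is quoted from Gorenstein \cite[Theorem 12]{9}, so there is no internal proof to compare yours against. Your proposal is, in substance, a reconstruction of the classical argument underlying that citation: write $\omega = h\,\xi$ with $\xi = dx/F_y$, establish $\operatorname{div}(\xi) = (d-3)L - \mathfrak{C}$, and translate the resulting condition $\operatorname{div}(h) \geq \mathfrak{C} - (d-3)L$ into the degree bound $\deg h \leq d-3$ plus the conductor (adjoint) conditions at the singular points. The reduction of the theorem to the divisor formula is correct and cleanly done, as is the verification at smooth affine points (including those where $F_y$ vanishes, handled via $\xi = -dy/F_x$), and the consistency check $\binom{d-1}{2} - \sum_P \delta_P = g$ is the right sanity test.

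The one place where your outline stops short of a proof is exactly where all the difficulty lives: the claim that at each point $Q$ of the normalization lying over a singular point $P$, the pole order of $\nu^{*}(dx/F_y)$ equals the conductor exponent of $\mathfrak{c}$ at $Q$. The dimension equalities $\dim_k \mathcal{O}/\mathfrak{c} = \dim_k \widetilde{\mathcal{O}}/\mathcal{O} = \delta_P$ that you invoke do not by themselves identify pole orders with conductor exponents; extracting that identification requires either Gorenstein's original arithmetic argument or Rosenlicht's residue-duality characterization of regular differentials (equivalently, the statement that the direct image $\nu_{*}\Omega^1_{\widetilde{\mathcal{X}}}$ is the conductor times the dualizing sheaf of the plane model). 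Since that identification \emph{is} the content of the cited theorem, your proposal should be read as a correct, well-organized plan whose core step is still deferred to the same classical source that the paper itself cites rather than proves.
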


\begin{theorem}\cite{maria}\label{2.3}
  With the above assumptions,
\begin{equation*}
\mathfrak{C}(h\dfrac{dx}{F_{y}}) = (\dfrac{\partial^{2p-2}}{\partial x^{p-1}\partial y^{p-1}}(F^{p-1}h))^{\frac{1}{p}}\dfrac{dx}{F_{y}}
\end{equation*}
for any $h \in K(\mathcal{X})$.
\end{theorem}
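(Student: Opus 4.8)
The plan is to compute $\mathfrak{C}(h\,dx/F_y)$ by lifting the problem to the ambient plane, where the Cartier operator admits a transparent description, and then descending back to $\mathcal{X}$ via the Poincar\'e residue. Write $F$ for the defining polynomial, so that $F=0$ on $\mathcal{X}$ and $F_x\,dx+F_y\,dy=0$ in $\Omega^1_{K/k}$. The starting point is the residue identity
\begin{equation*}
\operatorname{Res}_{F=0}\left(\frac{h\,dx\wedge dy}{F}\right)=\frac{h}{F_y}\,dx\Big|_{F=0},
\end{equation*}
which realizes each differential $h\,dx/F_y$ of Theorem~\ref{2.2} as the residue along $\mathcal{X}$ of a meromorphic $2$-form on $\mathbb{A}^2$ having a simple pole on $\mathcal{X}$. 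It therefore suffices to understand how the Cartier operator interacts with this $2$-form and then take residues.

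First I would record the two-variable analogue of Remark~\ref{remark}: for a top form $G\,dx\wedge dy$, the ambient Cartier operator acts by
\begin{equation*}
\mathfrak{C}_2(G\,dx\wedge dy)=\left(\frac{\partial^{2p-2}}{\partial x^{p-1}\partial y^{p-1}}G\right)^{1/p}dx\wedge dy,
\end{equation*}
which follows by expanding $G=\sum_{0\le i,j\le p-1} g_{ij}^{\,p}\,x^i y^j$ into its $p^2$ residue classes and observing, exactly as in Remark~\ref{remark}, that $\partial_x^{p-1}\partial_y^{p-1}$ kills every class except $(i,j)=(p-1,p-1)$ and returns $(p-1)!^2\,g_{p-1,p-1}^{\,p}\equiv g_{p-1,p-1}^{\,p}$ by Wilson's theorem. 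The computation then runs on the $1/p$-linearity $\mathfrak{C}(f^p\omega)=f\,\mathfrak{C}(\omega)$ from the Global Properties: since
\begin{equation*}
\frac{h\,dx\wedge dy}{F}=\frac{1}{F^{p}}\,\bigl(F^{p-1}h\bigr)\,dx\wedge dy,
\end{equation*}
one pulls $F^{-1}$ out to obtain $\mathfrak{C}_2(h\,dx\wedge dy/F)=F^{-1}\bigl(\partial_x^{p-1}\partial_y^{p-1}(F^{p-1}h)\bigr)^{1/p}dx\wedge dy$. Taking residues along $F=0$ on both sides yields precisely
\begin{equation*}
\mathfrak{C}\!\left(h\,\frac{dx}{F_y}\right)=\left(\frac{\partial^{2p-2}}{\partial x^{p-1}\partial y^{p-1}}(F^{p-1}h)\right)^{1/p}\frac{dx}{F_y}.
\end{equation*}

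The step carrying the real content, which I expect to be the main obstacle, is the compatibility of the Cartier operator with the Poincar\'e residue, namely the identity $\mathfrak{C}\circ\operatorname{Res}_{F=0}=\operatorname{Res}_{F=0}\circ\,\mathfrak{C}_2$ invoked in the final step. Here the passage between the ambient surface and the curve must be justified compatibly with the local structure of $\Omega^1_{\mathcal{X}}$: one verifies it locally at a point where $x$ is a separating parameter and $F_y\neq 0$, expanding the $2$-form in a formal neighborhood of $\mathcal{X}$ and matching the coefficient extracted by the residue against the coefficient extracted by the one-variable operator of Remark~\ref{remark}. As a consistency check one can avoid residues entirely: both sides of the asserted formula are additive in $h$ (using $(a+b)^{1/p}=a^{1/p}+b^{1/p}$ together with linearity of the partial derivatives), so it suffices to test the identity on monomials $h=x^ay^b$; the drawback of that more hands-on route is that evaluating $\mathfrak{C}(x^ay^b\,dx/F_y)$ forces one to expand $y$ as an algebraic function of $x$, which is exactly the complication the residue formulation is designed to sidestep.
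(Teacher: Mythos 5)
The paper itself offers no proof of Theorem \ref{2.3}: it is quoted from \cite{maria}, where it is the St\"ohr--Voloch formula for the Cartier operator on plane curves, proved there by a direct computation inside the function field $K$ (one expands $F^{p-1}h$ into its $p$-power components in $k[x,y]$ and tracks what this relation forces in the basis $1,x,\dots,x^{p-1}$ of $K$ over $K^p$). So your residue route is genuinely different from the source argument, and its outer skeleton is sound: the residue identity, the $\nabla$-formula for a two-variable operation $\mathfrak{C}_2$ on top forms, and the semilinearity step $h\,dx\wedge dy/F=F^{-p}(F^{p-1}h)\,dx\wedge dy$ are all correct.

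The gap is exactly at the step you flag as the main obstacle, and the sketch you give does not actually get past it. Your $\mathfrak{C}_2$ is \emph{defined} by the $\nabla$-recipe in the coordinates $(x,y)$. If one tries to verify $\mathfrak{C}\circ\operatorname{Res}_{F=0}=\operatorname{Res}_{F=0}\circ\,\mathfrak{C}_2$ in those same coordinates, then for $\eta=(h/F)\,dx\wedge dy$ one finds $\operatorname{Res}(\mathfrak{C}_2\eta)=(\nabla(F^{p-1}h))^{1/p}F_y^{-1}dx|_{F=0}$ and $\mathfrak{C}(\operatorname{Res}\eta)=\mathfrak{C}\bigl((h/F_y)\,dx|_{F=0}\bigr)$, so the ``compatibility lemma'' is word-for-word the statement of Theorem \ref{2.3}; nothing has been reduced. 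The reduction becomes real only if $\mathfrak{C}_2$ is computed in coordinates adapted to the curve, say $(x,u)$ with $u=F$ near a point where $F_y\neq 0$: writing $h/F_y=\sum_{i,j=0}^{p-1}b_{ij}^p x^i u^j$ in $k[[x,u]]$, one gets $\mathfrak{C}_2\eta=(b_{p-1,0}/u)\,dx\wedge du$, whose residue $b_{p-1,0}|_{u=0}\,dx$ matches $\mathfrak{C}$ applied to $\operatorname{Res}\eta=\sum_i (b_{i0}|_{u=0})^p x^i\,dx$, exactly as your ``matching of coefficients'' envisages. But this uses that the $\nabla$-recipe defining $\mathfrak{C}_2$ gives the same operator in the new coordinates $(x,u)$ as in $(x,y)$ --- the two-variable analogue of the invariance the paper cites for one variable (\cite[Proposition 1]{100}), i.e.\ the Cartier isomorphism in dimension two. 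That invariance is a theorem of essentially the same depth as the formula you are proving, and your proposal neither states nor proves it; as written, the local verification either assumes it silently or is circular. To close the argument you must either prove this invariance (at least for triangular changes $(x,y)\mapsto(x,F)$), or drop the residue detour and run the St\"ohr--Voloch computation directly in $K$. A minor additional point: your final ``consistency check'' needs more than additivity in $h$ --- a general $h\in K(\mathcal{X})$ is not a finite $k$-combination of monomials, so one must also invoke $1/p$-semilinearity (write $h=PQ^{p-1}\cdot(1/Q)^p$) before reducing to $h=x^ay^b$.
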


The differential operator $\nabla$ is defined by
$$\nabla = \dfrac{\partial^{2p-2}}{\partial x^{p-1} \partial y^{p-1}},$$
has the property
\begin{equation}\label{123321}
  \nabla(\sum_{i,j} c_{i,j}X^iY^j)= \sum_{i,j} c_{ip+p-1,jp+p-1}X^{ip}Y^{jp}.
\end{equation}

\section{The $a$-number of Curve $\mathcal{X}$}\label{202}

In this section, we consider the curve $\mathcal{X}$ as follows: $y^{\frac{q+1}{2}}=x^{m}+x$ of genus $g(\mathcal{X})= \frac{(q-1)(m-1)}{4}$, with $q=p^s$ and $p>3$ and $m=2, 3$ or $m=p^s$ over $\mathbb{F}_{q^2}$. From Theorem \ref{2.2}, one can find a basis for the space $H^0(\mathcal{X}, \Omega^1)$ of holomorphic differentials on $\mathcal{X}$, namely
$$\mathcal{B} = \{ x^iy^jdx \mid 1 \leq \frac{q+1}{2}i+ mj \leq g \}.$$

 \begin{proposition}\label{111}
   The rank of the Cartier operator $\mathfrak{C}$ on the curve $\mathcal{X}$ equals the number of pairs $(i, j)$ with $\frac{q+1}{2}i+ mj \leq g$ such that the system of congruences mod $p$
\begin{equation}\label{12}
\Bigg\{
             \begin{array}{c}
              km + h - k + j \equiv 0,\\
              (p - 1 - h)(\frac{(q+1)}{2}) + i \equiv p-1, \\
             \end{array}
\end{equation}
has solution $(h, k)$ for $0 \leq h \leq \frac{p-1}{2}, 0 \leq k\leq h$.
 \end{proposition}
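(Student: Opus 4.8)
The plan is to realize $\mathfrak{C}$ as an explicit matrix on the basis $\mathcal{B}$ and to count its rank term by term. Fix $\omega_{i,j}=x^{i}y^{j}\,dx\in\mathcal{B}$. Since $F(x,y)=y^{\frac{q+1}{2}}-x^{m}-x$ gives $F_{y}=\frac{q+1}{2}\,y^{\frac{q+1}{2}-1}$, we may write $\omega_{i,j}=c\,h_{i,j}\,\frac{dx}{F_{y}}$ for a nonzero constant $c\in k$ and a single monomial $h_{i,j}$. Theorem \ref{2.3} then turns the computation of $\mathfrak{C}(\omega_{i,j})$ into the evaluation of $\nabla\!\bigl(F^{p-1}h_{i,j}\bigr)$, and the identity (\ref{123321}) tells us which monomials survive.

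The second step is the multinomial expansion $F^{p-1}=(y^{\frac{q+1}{2}}-x^{m}-x)^{p-1}$, where I group the two $x$-monomials $-x^{m}$ and $-x$. A general term is indexed by the exponent $a$ on $y^{\frac{q+1}{2}}$, the exponent $k$ on $-x^{m}$, and the exponent $h-k$ on $-x$, with $a+h=p-1$; here $h=k+(h-k)$ is the total power carried by the two $x$-factors and $k$ the power on $x^{m}$, so $0\le k\le h$ and $a=p-1-h\ge 0$. Multiplying by $h_{i,j}$ produces a monomial $x^{A}y^{B}$ whose exponents are affine in $(a,h,k)$ and $(i,j)$; by (\ref{123321}) it survives $\nabla$ exactly when $A\equiv p-1$ and $B\equiv p-1\pmod p$. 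Reading off the $x$-exponent and the $y$-exponent conditions and substituting $a=p-1-h$ gives precisely the two congruences of (\ref{12}): the $x$-degree contribution $mk+(h-k)$ is the source of the combination $km+h-k$ in the first congruence, and the power $a=p-1-h$ on $y^{\frac{q+1}{2}}$ is the source of the factor $(p-1-h)\frac{q+1}{2}$ in the second.

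The third step extracts the structure that makes a clean count possible. Because $\gcd\bigl(\frac{q+1}{2},p\bigr)=1$ (indeed $\frac{q+1}{2}\equiv 2^{-1}\pmod p$, as $q=p^{s}$ and $p>3$), the second congruence determines $h$ uniquely modulo $p$; because $\gcd(m-1,p)=1$ in each of the cases $m=2,3,p^{s}$, the first congruence then determines $k$ uniquely modulo $p$. Thus (\ref{12}) has at most one solution in the triangular box $0\le k\le h\le\frac{p-1}{2}$, where the bound $\frac{p-1}{2}$ (rather than $p-1$) reflects holomorphicity of the target together with the congruence $\frac{q+1}{2}\equiv 2^{-1}\pmod p$, while $0\le k\le h$ merely records $k\ge 0$ and $h-k\ge 0$. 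Consequently $\mathfrak{C}(\omega_{i,j})$ is either $0$ or a nonzero scalar multiple of a single basis differential $\omega_{i',j'}$, and it is nonzero exactly when (\ref{12}) is solvable in that box. So $\mathfrak{C}$ is ``monomial'' on $\mathcal{B}$, and its rank equals the number of source pairs $(i,j)$ for which (\ref{12}) is solvable, provided the resulting images are linearly independent.

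That linear independence is the step I expect to be the main obstacle, and it is exactly where the hypothesis $n=\frac{q+1}{2}$ (as opposed to a general $n$) is essential. I would show that the assignment $(i,j)\mapsto(i',j')$ sending each solvable source to the index of its image is injective. If two solvable sources shared an image, comparing the integer $y$-exponents of the two images forces an equation of the shape $\frac{q+1}{2}\,(a_{1}-a_{2})=j_{2}-j_{1}$; but the holomorphicity bound $\frac{q+1}{2}i+mj\le g=\frac{(q-1)(m-1)}{4}$ keeps $j<\frac{q+1}{2}$ for every basis index, so $|j_{2}-j_{1}|<\frac{q+1}{2}$ and hence $a_{1}=a_{2}$; feeding this back into the $x$-exponents and using $\gcd(m-1,p)=1$ forces $(i_{1},j_{1})=(i_{2},j_{2})$. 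Injectivity shows the nonzero images form a linearly independent subset of $\mathcal{B}$, whence the rank of $\mathfrak{C}$ equals the number of $(i,j)$ with $\frac{q+1}{2}i+mj\le g$ for which (\ref{12}) is solvable, as claimed. What then remains is only the routine verification of the exponent arithmetic in the second step and of the stated ranges of $(h,k)$.
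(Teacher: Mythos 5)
Your proposal follows essentially the same route as the paper's own proof: both apply Theorem \ref{2.3} together with formula (\ref{123321}) to the multinomial expansion of $F^{p-1}x^iy^j$ indexed by the exponents $(h,k)$ on the two $x$-monomials, read off the congruences that a term must satisfy to survive $\nabla$, and then obtain the rank by showing (via the genus bounds on $i$ and $j$) that distinct source pairs cannot produce images with the same exponents, so the nonzero images are linearly independent. Your additional observations --- the explicit $F_y$ normalization of the basis and the uniqueness of the solution $(h,k)$ forced by $\gcd(\tfrac{q+1}{2},p)=\gcd(m-1,p)=1$, which makes $\mathfrak{C}$ act monomially on $\mathcal{B}$ --- are refinements of, not departures from, the paper's argument.
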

\begin{proof}
From Theorem \ref{2.3}, $\mathfrak{C}((x^iy^j/F_y)dx) =(\nabla(F^{p-1}x^iy^j))^{1/p}dx/F_y$. So, we apply the differential operator $\nabla$ to
\begin{equation}\label{333}
  (y^{\frac{q+1}{2}}-x-x^{m})^{p-1}x^iy^j = \sum_{h=0}^{p-1}\sum_{k=0}^{h} (^{p-1}_h)(^{h}_{k})(-1)^{h-k}x^{ (p - 1 - h)(\frac{(q+1)}{2}) + i}y^{km + h - k + j}
\end{equation}
for each $i, j$ such that $\frac{q+1}{2}i+ mj \leq g$.

From the Formula (\ref{123321}), $\nabla(y^{\frac{q+1}{2}}-x-x^{m})^{p-1}x^iy^j \neq 0$ if and only if for some $(h, k)$, with $0 \leq h \leq \frac{p-1}{2}$ and $0 \leq k\leq h$, satisfies both the following congruences mod $p$:
\begin{equation}\label{444}
\Bigg\{
             \begin{array}{c}
              km + h - k + j \equiv 0,\\
              (p - 1 - h)(\frac{(q+1)}{2}) + i \equiv p-1. \\
             \end{array}
\end{equation}
Take $(i, j) \neq(i_0, j_0)$ in this situation both $\nabla(y^{\frac{q+1}{2}}-x-x^{m})^{p-1}x^iy^j$ and $\nabla(y^{\frac{q+1}{2}}-x-x^{m})^{p-1}x^{i_0}y^{j_0}$ are nonzero. We claim that these are linearly independent of $k$. To show independence, we prove that, for each $(h, k)$ with $0 \leq h \leq p -1$ and $0 \leq k\leq h$ there is no $(h_0, k_0)$ with $0 \leq h_0\leq p -1$ and $0 \leq k_0\leq h_0$ such that
\begin{equation}\label{555}
  \Bigg\{
             \begin{array}{c}
             km + h - k + j = k_0m + h_0 - k_0 + j_0,\\
              (p - 1 - h)(\frac{(q+1)}{2}) + i = (p - 1 - h_0)(\frac{(q+1)}{2}) + i_0.\\
             \end{array}
\end{equation}
If $h=h_0$, then $j \neq j_0$ by $i=i_0$ from the second equation; thus, $k \neq k_0$. It is assumed $k >k_0$. Then $j -j_0=(q-1)(k -k_0)>q-1$, a contradiction as $j - j_0 \leq \frac{(q-1)(m-1)}{4q}$. Similarly, if $k=k_0$, then $h \neq h_0$ by $(i, j) \neq(i_0, j_0)$. It is assumed $h>h_0$. Then $i-i_0 = \frac{q+1}{2}(h-h_0)>\frac{q+1}{2}$, a contradiction as $i-i_0 \leq \frac{(q-1)(m-1)}{2(q+1)}$.
\end{proof}

For convenience, we signify the matrix representing the $p$-th power of the Cartier operator $\mathfrak{C}$ with $A_s:=A(\mathcal{X})$ on the curve $\mathcal{X}$ for the basis $\mathcal{B}$, where $q=p^s$.
\begin{theorem}\label{thex}
  If $q =p^s$ for $s \geq 1$, $p>3$ and $m=2, 3$ or $m=p^s$, then the $a$-number of the curve $\mathcal{X}$ equals
  $$\dfrac{m-1}{20}(2q+3p-5),$$
  where $\mbox{gcd}(\frac{q+1}{2},m)=1$ and $\mbox{gcd}(p,m-1)=1$.
\end{theorem}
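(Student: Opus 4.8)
The plan is to reduce the computation to a counting problem via the identity
$a(\mathcal{X}) = g - \mathrm{rank}\,\mathfrak{C}$.
This holds because $a(\mathcal{X})$ is, by Section~2, the dimension of the kernel of the Cartier operator acting on the $g$-dimensional space $H^0(\mathcal{X},\Omega^1)$, and $\mathfrak{C}$, being an additive $1/p$-linear self-map, still obeys rank--nullity on this space. By Proposition~\ref{111}, $\mathrm{rank}\,\mathfrak{C}$ is the number of index pairs $(i,j)$ of basis differentials in $\mathcal{B}$ for which the system \eqref{12} admits a solution $(h,k)$ with $0 \le h \le \frac{p-1}{2}$ and $0 \le k \le h$. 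Consequently $a(\mathcal{X})$ equals the number of pairs $(i,j)$ indexing $\mathcal{B}$ for which \eqref{12} has \emph{no} such solution, and everything comes down to a residue-constrained lattice-point count over the index region supplied by Theorem~\ref{2.2}.

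Next I would solve \eqref{12} modulo $p$. Since $q = p^s \equiv 0 \pmod p$, one has $\frac{q+1}{2} \equiv 2^{-1} \pmod p$, so the second congruence becomes $i \equiv (1+h)2^{-1} - 1 \pmod p$ and fixes the residue of $i$ as a function of $h$ alone; as $h$ runs over $0,1,\dots,\frac{p-1}{2}$ this yields $\frac{p+1}{2}$ distinct admissible residues $r_i(h)$ for $i$. For each fixed $h$, the first congruence forces $j \equiv k(1-m) - h \pmod p$, and because $\gcd(p,m-1)=1$ the factor $1-m$ is invertible, so the $h+1$ choices $k=0,\dots,h$ give $h+1$ distinct residues, a set $J_h$. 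The solvable residue pairs therefore form $\bigcup_{h=0}^{(p-1)/2}\{r_i(h)\}\times J_h$, of total size $\sum_{h=0}^{(p-1)/2}(h+1) = \frac{(p+1)(p+3)}{8}$.

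With the admissible residues in hand, I would count, for each such pair, the lattice points of $\mathcal{B}$ in that class, sum to get $\mathrm{rank}\,\mathfrak{C}$, and subtract from $g$. The index region from Theorem~\ref{2.2} is essentially a skew triangle bounded above by $\frac{q+1}{2}i + mj$, so within each residue class the contribution is governed by the area of the region together with boundary corrections along the hypotenuse and the axes; the factor $m-1$ is expected because the admissible $j$-residues $J_h$ organize into $m-1$ congruent families. I would run the cases $m=2$ and $m=3$ (where $m \not\equiv 0 \pmod p$) in parallel and treat $m = p^s$ separately, since there $m \equiv 0 \pmod p$ collapses the first congruence to $j \equiv k-h \pmod p$ and alters the bookkeeping. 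Assembling the three cases and simplifying should produce the closed form $\frac{m-1}{20}(2q+3p-5)$.

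The main obstacle will be this lattice-point count. The region is a skew triangle, the residue constraint is genuinely two-dimensional (a single prescribed residue for $i$ coupled to a short, $h$-dependent interval of residues for $j$), and the target expression depends on $q=p^s$ and on $p$ \emph{separately}, so the boundary contributions cannot be absorbed as lower-order error: they are precisely what produces the $3p-5$ correction and must be evaluated exactly. A secondary difficulty is that the short $j$-ranges interact with the diagonal boundary differently in the regime $s=1$ (where one expects every basis differential to lie in $\ker\mathfrak{C}$, so that $a=g$) than in $s \ge 2$; reconciling these two regimes, together with the separate $m=p^s$ analysis, is where the delicate bookkeeping concentrates.
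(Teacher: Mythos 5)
Your setup is sound and coincides with the paper's: the identity $a(\mathcal{X}) = g - \mathrm{rank}\,\mathfrak{C}$ (valid for the $1/p$-linear Cartier operator over a perfect field), the use of Proposition \ref{111} to turn the rank into a count of pairs $(i,j)$, and the reduction of the congruences (\ref{12}) modulo $p$ are all correct: since $q \equiv 0 \pmod p$ one indeed gets $\frac{q+1}{2} \equiv 2^{-1}$, hence $i \equiv (1+h)2^{-1}-1$ and $j \equiv k(1-m)-h$, with distinctness of residues guaranteed by $\gcd(p,m-1)=1$. Your sanity check for $s=1$ is also consistent with the statement, since $\frac{m-1}{20}(2p+3p-5) = \frac{(m-1)(p-1)}{4} = g$.

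However, the proposal stops exactly where the content of the theorem begins. Everything up to the residue analysis is bookkeeping; the theorem amounts to the exact count $\mathrm{rank}\,\mathfrak{C} = \frac{3(q-p)(m-1)}{20}$, and you never produce it --- you write that assembling the cases ``should produce the closed form'' and then enumerate the reasons the count is hard (boundary corrections that ``must be evaluated exactly'', the interaction of the short $j$-ranges with the hypotenuse, the separate $m=p^s$ regime). Naming the obstacle is not the same as overcoming it: the peculiar constants $\frac{3}{20}$ and $2q+3p-5$ come precisely from that lattice-point evaluation, and your heuristic that the $j$-residues ``organize into $m-1$ congruent families'' is an unsupported guess rather than an argument. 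For comparison, the paper carries out the count by induction on $s$: it parametrizes solutions by writing $i = tp + p + \frac{h}{2} - \frac{1}{2}$ and $j = lp + k - h + km$ with $l,t \geq 0$, shows $\mathrm{rank}(A_1)=0$, bounds $l \leq \frac{3p(p-1)}{5}$ and $t \leq \frac{m-1}{4}$ to get $\mathrm{rank}(A_2) = \frac{3(p^2-p)(m-1)}{20}$, and for $s \geq 3$ counts the new admissible pairs in the annulus $\frac{(p^{s-1}-1)(m-1)}{4}+1 \leq \frac{q+1}{2}i+mj \leq \frac{(p^s-1)(m-1)}{4}$ to obtain the recursion $\mathrm{rank}(A_s) = \mathrm{rank}(A_{s-1}) + \frac{3p^{s-1}(p-1)(m-1)}{20}$, whence the rank formula and $a(\mathcal{X}) = \frac{(q-1)(m-1)}{4} - \frac{3(q-p)(m-1)}{20} = \frac{(m-1)(2q+3p-5)}{20}$. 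Your single-pass count over residue classes could in principle replace that induction, but until it is actually executed --- uniformly in $s$, in all three cases of $m$, with the boundary terms evaluated --- what you have is a plan, not a proof.
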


\begin{proof}
First, we prove that if $q=p^s, s \geq 1$ and even, then $\mbox{rank}(A_s) =\dfrac{3}{20}(q-p)(m-1)$. In this case, $\frac{q+1}{2}i+ mj \leq g$ and System (\ref{12}) mod $p$ reads
  \begin{equation}\label{14}
\Bigg\{
             \begin{array}{c}
              km+h-k+j \equiv 0,\\
              -\frac{h}{2} - \frac{1}{2} + i \equiv p-1. \\
             \end{array}
\end{equation}
First assume that $s=1$, for $q =p$, we have $\frac{p+1}{2}i+ mj \leq g$ and System (\ref{14}) becomes
  \begin{equation*}
\Bigg\{
             \begin{array}{c}
              j = k-h-km,\\
              i = p + \frac{h}{2} - \frac{1}{2},\\
             \end{array}
\end{equation*}
In this case, $\frac{p+1}{2}i+ mj \leq g$ and $m=p^s$ that is, $\frac{h(p-4p^s+1)}{4} + kp^s \leq \frac{-2p^2 -p(1-p^s)-p^s+1}{4}$ and $h \geq \frac{-2p^2 -p(1-p^s)-p^s+1}{p+1-4p^s}$ thus, $h > \frac{p-1}{2}$ is a contradiction by Proposition \ref{111}. For $m=2,3$ is trivial. Consequently, there is no pair $(i, j)$ for which the system above admits a solution $(h, k)$. Therefore, $\mbox{rank}(A_1) =0$.

Let $s=2$, so $q =p^2$. For $\frac{p^2+1}{2}i+ mj \leq g$, the above argument still holds. Therefore, $\frac{(p-1)(m-1)}{4} + 1 \leq \frac{p^2+1}{2}i+ mj \leq \frac{(p^2-1)(m-1)}{4}$ and our goal is to determine for which $(i, j)$ there is a solution $(h, k)$ of the system mod $p$
  \begin{equation*}
\Bigg\{
             \begin{array}{c}
             km+h - k +j \equiv 0,\\
              -\frac{h}{2} - \frac{1}{2} + i \equiv p-1. \\
             \end{array}
\end{equation*}
Take $l, t \in Z^+_0$ so that
  \begin{equation*}
\Bigg\{
             \begin{array}{c}
              j=lp+k-h+km,\\
              i= tp+p+\frac{h}{2} - \dfrac{1}{2}.\\
             \end{array}
\end{equation*}

In this situation, $i < \frac{2g}{p^2+1}=\frac{(p^2-1)(m-1)}{2(p^2+1)}$, so $tp+p+\frac{h}{2}-\frac{1}{2} \leq \frac{(p^2-1)(m-1)}{2(p^2+1)}$. Then $t\leq \frac{(p^2-1)(m-1)}{2(p^2+1)}$. And $j < \frac{(p^2-1)(m-1)}{4m}$, so $lp + k-h<\frac{(p^2-1)(m-1)}{4m}$, Then $l<\frac{(p^2-1)(m-1)}{4m}$. Thus, we can say that $ l \leq \frac{3p(p-1)}{5}$, and $t \leq \frac{m-1}{4}$. Thus, $\frac{3(p^2-p)(m-1)}{20}$ suitable values for $(i, j)$ were obtained, where $\mbox{rank}(A_2) =\frac{3(p^2-p)(m-1)}{20}$.


 For $s \geq 3$, $\mbox{rank}(A_s)$ equals $\mbox{rank}(A_{s-1})$ plus the number of pairs $(i, j)$ with $\frac{(p^{s-1} -1)(m-1)}{4}+1\leq \frac{q+1}{2}i+ mj \leq \frac{(p^{s} -1)(m-1)}{4}$ such that the system mod $p$
  \begin{equation*}
\Bigg\{
             \begin{array}{c}
             km+ h-k+j \equiv 0,\\
              -\frac{h}{2} - \frac{1}{2} + i \equiv p-1, \\
             \end{array}
\end{equation*}
has a solution. With our usual conventions on $l, t$, a computation shows that such pairs $(i, j)$ are obtained for $0 \leq l \leq \frac{(p^s-1)(m-1)}{4m}$ from which we have $\frac{3p^{s-1}(p-1)}{5} -1 \leq l \leq \frac{3p^{s-1}(p-1)}{5}$, and $0 \leq t\leq \frac{(p^s-1)(m-1)}{2(p^s+1)}$, and we have $\frac{m-1}{4}-1 \leq t \leq \frac{(m-1)}{4}$. In this case we have
$$\frac{3p^{s-1}(p-1)(m-1)}{20}$$
choices of $(h, k)$. Therefore we get
$$\mbox{rank}(A_s)= \mbox{rank}(A_{s-1})+ \frac{3p^{s-1}(p-1)(m-1)}{20}.$$

Our claim on the rank of $A_s$ follows by induction on $s$. Hence

  \begin{equation*}
  \begin{array}{ccccccc}
              a(\mathcal{X}) &=& \dfrac{(p^s-1)(m-1)}{4} - \dfrac{3(q-p)(m-1)}{20}  \\
              &=& \dfrac{(m-1)(2q+3p-5)}{20} \\
             \end{array}
\end{equation*}
\end{proof}

\paragraph*{\textbf{Acknowledgements.}}
This paper was written while Vahid Nourozi visited Unicamp (Universidade Estadual de Campinas) supported by TWAS/Cnpq (Brazil) with fellowship number $314966/2018-8$.


\bibliographystyle{amsplain}

\end{document}